\theoremstyle{plain}
\newtheorem{theorem}{\bf Theorem}[section]
\newtheorem{proposition}[theorem]{\bf Proposition}
\newtheorem{lemma}[theorem]{\bf Lemma}
\theoremstyle{definition}
\newcommand{\Z}{\mathbb Z}
\renewcommand{\v}{{\sf v}}
\newcommand{\bdot}{\boldsymbol{\cdot}}
 \DeclareMathOperator{\ord}{ord}
\DeclareMathOperator{\lcm}{lcm} 
 \DeclareMathOperator{\supp}{supp}
\renewcommand{\t}{\, | \,}
\numberwithin{equation}{section}
\subjclass[2010]{11B75 (primary), 20D60, 11P70 (secondary)}
\title{Some zero-sum problems over $\langle x,y \mid x^2 = y^{n/2}, y^n = 1, yx = xy^s \rangle$}
\keywords{Zero-sum problems, Gao conjecture, Zhuang-Gao conjecture, groups with a cyclic index 2 subgroup}
\author[S. Ribas]{S. Ribas}
\address{Departamento de Matem\'{a}tica\\
Universidade Federal de Ouro Preto\\
Ouro Preto, MG\\
35402-136\\
Brazil\\
}
\email{savio.ribas@ufop.edu.br }
\date{\today}
\begin{document}

\maketitle

\begin{abstract}
Let $n \ge 8$ be even, and let $G = \langle x, y \mid x^2 = y^{n/2}, y^n = 1, yx = xy^s \rangle$, where $s^2 \equiv 1 \pmod n$ and $s \not\equiv \pm1 \pmod n$. In this paper, we provide the precise values of some zero-sum constants over $G$, namely the small Davenport constant, $\eta$-constant, Gao constant, and Erd\H os-Ginzburg-Ziv constant. In particular, the Gao's and Zhuang-Gao's Conjectures hold for $G$. We also solve the associated inverse problems when $n \equiv 0 \pmod 4$. 
\end{abstract}

\section{Introduction}

%

Let $G$ be a finite group multiplicatively written. The {\em zero-sum problems} consist of establishing conditions that guarantee that a given sequence over $G$ has a non-empty product-one subsequence with some prescribed property. These problems date back to Erd\H os, Ginzburg and Ziv \cite{EGZ}, van Emde Boas and Kruyswijk \cite{vEBK} and Olson \cite{Ols1,Ols2}. It has applications and connections in several branches of mathematics. Among others, we highlight the factorization theory \cite{GeHK}. For an overview over abelian groups, see the surveys from Caro \cite{Car} and Gao and Geroldinger \cite{GaGe}. Later, the zero-sum problems were further generalized to non-abelian settings \cite{Yu,YP,ZhGa}. For more recent results, see \cite{Bas,MLMR,MR1,MR2,MR3,FZ,GLQ,GGOZ,HZ,OZ1,QL1,QL2}.

\subsection{Definitions and notations}

Let $\mathcal F(G)$ be a free abelian monoid with basis $G$. A sequence $S$ over $G$ is an element of $\mathcal F(G)$. Thus sequences are formed by finitely many terms from $G$ where repetition is allowed and order is disregarded. The operation in $\mathcal F(G)$ is the sequence concatenation  product denoted by ${\bdot}$. Therefore a sequence $S \in \mathcal F(G)$ has the form 
$$S = g_1 {\bdot} \dots {\bdot} g_k = \prod_{1\le i \le k}^{\bullet} \, g_i = \prod_{1\le i \le k}^{\bullet}
g_{\tau(i)} = \prod_{g \in G}^{\bullet} g^{[\v_g(S)]}$$ 
for any permutation $\tau$ of $[1,k]$, where $g_1, \dots, g_k \in G$ are the terms of $S$, $\v_g(S)$ is the multiplicity of $g$ in $S$, and $|S| = k = \sum_{g \in G} \v_g(G) \ge 0$ is the length of $S$. A sequence $T$ is a subsequence of $S$ if $T \mid S$ as elements of $\mathcal F(G)$; equivalently, if $\v_g(T) \le \v_g(S)$ for all $g \in G$. In this case, we write $S {\bdot} T^{[-1]} = \prod_{g \in G}^{\bullet}  g^{[\v_g(S) - \v_g(T)]}$. Furthermore, denote $\prod_{1\le i \le k}^{\bullet} T$ by $T^{[k]}$ and let $S_K = \prod_{g \in K}^{\bullet} g^{[\v_g(S)]}$ be the subsequence of $S$ formed by the terms that lie in a subset $K \subset G$.


The set of products of $S$, the set of subproducts of $S$, and the set of $n$-subproducts of $S$ are defined, respectively, by
$$\pi(S) = \left\{ \prod_{i=1}^k g_{\tau(i)} \in G \mid \; \tau \text{ is a permutation of $[1,k]$}\right\}, \quad \Pi(S) = \bigcup_{T \mid S \atop |T| \ge 1} \pi(T), \quad \text{and} \quad \Pi_n(S) = \bigcup_{T \mid S \atop |T| = n} \pi(T).$$

The sequence $S$ is said to be a {\em product-one sequence} if $1 \in \pi(S)$, an {\em $n$-product-one sequence} if $S$ is a product-one sequence of length $|S| = n$, a {\em short product-one sequence} if $S$ is a product-one sequence with $1 \le |S| \le \exp(G)$, where $\exp(G) = \lcm\{\ord(g); \; g \in G\}$ denotes the exponent of $G$, a {\em product-one free sequence} if $1 \not\in \Pi(S)$, an {\em $n$-product-one free sequence} if $1 \not\in \Pi_n(S)$, and a {\em short product-one free sequence} if $1 \not\in \prod_{j}(S)$ for every $j \in [1,\exp(G)]$. For a normal subgroup $H \vartriangleleft G$, let $\phi_H \colon G\rightarrow G/H$ be the natural epimorphism. Let $T \in \mathcal F(G)$ with $|T| = n$. If $\phi_H(T) \in \mathcal F(G/H)$ is a product-one sequence, then we say $T$ is an {\em $n$-product-$H$ sequence}. The case $H = \{1\}$ means that $T$ is an $n$-product-one sequence.

The following zero-sum invariants are defined:
\begin{enumerate}[(a)]
\item {\em Small Davenport constant}, ${\sf d}(G)$, is the maximal length of all product-one free sequences over $G$,
\item {\em Gao constant}, ${\sf E}(G)$, is the smallest $\ell > 0$ such that every $S \in \mathcal F(G)$ with $|S| \ge \ell$ has a $|G|$-product-one subsequence,
\item {\em $\eta$-constant}, $\eta(G)$, is the smallest $\ell > 0$ such that every $S \in \mathcal F(G)$ with $|S| \ge \ell$ has a short product-one subsequence,
\item {\em Erd\H os-Ginzburg-Ziv constant}, ${\sf s}(G)$, is the smallest $\ell > 0$ such that every $S \in \mathcal F(G)$ with $|S| \ge \ell$ has an $\exp(G)$-product-one subsequence.
\end{enumerate}

For a finite group $G$, all these invariants are well-defined and finite.

\subsection{Main results}

Throughout this paper, $n \ge 8$ is an even positive integer which is not twice an odd prime power and $s$ is an integer satisfying $s^2 \equiv 1 \pmod n$ and $s \not\equiv \pm1 \pmod n$ (in fact, if $n$ is twice an odd prime power, then such $s$ does not exist). Moreover, let 
\begin{equation}\label{def:G}
G = \langle x,y: x^2=y^{n/2}, \; y^n=1, \; yx=xy^s \rangle,
\end{equation}
for which 
\begin{equation}\label{expG}
\exp(G) = 
\begin{cases}
n &\text{ if $n \equiv 0 \pmod 4$,} \\
2n &\text{ if $n \equiv 2 \pmod 4$.}
\end{cases}
\end{equation}
This group naturally extends the group studied in \cite{AMR}, as well as dicyclic groups extend dihedral groups. 

The main result of this paper is the following.

\begin{theorem}\label{thm:main}
Let $G$ be the group defined by \eqref{def:G}. Then 
$${\sf d}(G) = n, \quad
\eta(G) = n+1, \quad
{\sf s}(G) = 
\begin{cases} 
2n &\text{ if $n \equiv 0 \pmod 4$,} \\
3n &\text{ if $n \equiv 2 \pmod 4$,}
\end{cases} \quad \text{ and } \quad
{\sf E}(G) = 3n.$$
\end{theorem}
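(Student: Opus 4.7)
The plan is to prove each of the four equalities by separately establishing a lower bound (via an explicit extremal sequence) and an upper bound (via a structural argument on the cyclic subgroup $\langle y\rangle$). The underlying framework throughout is the normal series $\{1\} \vartriangleleft \langle y\rangle \vartriangleleft G$, which lets any $S \in \mathcal F(G)$ be split as $S = S_y \bdot S_x$, where $S_y$ collects the terms in $\langle y\rangle \cong C_n$ and $S_x$ those in the coset $x\langle y\rangle$. Since $G/\langle y\rangle \cong C_2$, any product-one subsequence of $S$ must use an even number of terms from $S_x$. The key algebraic identities to exploit are $yx = xy^s$, $x^2 = y^{n/2}$, $s^2 \equiv 1 \pmod n$, and the pairing formula $(xy^a)(xy^b) = y^{n/2 + as + b}$, which turns any two coset terms into a single cyclic term.

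\textbf{Lower bounds.} The sequence $S_0 = y^{[n-1]} \bdot x$ has length $n$, and a direct case-check shows every non-empty subproduct is either $y^j$ with $1 \le j \le n-1$ or $xy^j$ for some $j$, so $1 \notin \Pi(S_0)$. This witnesses ${\sf d}(G) \ge n$ and $\eta(G) \ge n+1$. For ${\sf s}(G)$ and ${\sf E}(G)$ I would pad $S_0$ by carefully chosen copies of $y$ and of $x$ (or $xy^i$ for a small set of $i$'s) to produce sequences of length $\exp(G)+n-1$ and $|G|+n-1 = 3n-1$ respectively, containing no $\exp(G)$- or $|G|$-product-one subsequence. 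The construction bifurcates into the cases $n \equiv 0$ and $n \equiv 2 \pmod 4$ because $\exp(G)$ is $n$ or $2n$.

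\textbf{Upper bounds.} I would address the invariants in the order ${\sf d} \to \eta \to {\sf s} \to {\sf E}$, each time splitting on $|S_x|$. When $|S_x|$ is small, enough terms remain in $S_y$ that the problem reduces to a zero-sum question over $C_n$ controlled by the classical values ${\sf d}(C_n)=n-1$, $\eta(C_n)=n$, and ${\sf s}(C_n)={\sf E}(C_n)=2n-1$. When $|S_x|$ is large, I pair coset terms via the formula above and concatenate the resulting cyclic expressions with $S_y$, then again invoke the cyclic-group results to hit the prescribed target; the extra $n/2$ from each pair, together with the possibility of picking up $s$-twists on $S_y$-terms caused by insertion between coset letters (using $s^2\equiv 1$), provide the flexibility to realise prescribed products of prescribed length. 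The bounds ${\sf d}(G)\le n$ and $\eta(G)\le n+1$ should fall out of the small-$|S_x|$ reduction alone, whereas ${\sf s}(G)$ and ${\sf E}(G)$ genuinely require the pairing step.

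\textbf{Expected obstacle.} The main source of difficulty is the $s$-twist: because $s \not\equiv \pm 1 \pmod n$, the set of cyclic exponents reachable from a fixed multiset $\{a_1,\ldots,a_{2k}\}$ of coset exponents is not just $\sum a_i$ but a large union of values $s\cdot\sum_{i\in O}a_i + \sum_{j\in E}a_j$ over bipartitions $\{1,\ldots,2k\}=O\sqcup E$ with $|O|=|E|=k$, further modulated by twists of the interleaved $y$-terms. Tracking this achievable set while simultaneously controlling the \emph{length} of the subsequence, as in ${\sf s}(G)$ and ${\sf E}(G)$, is where the bulk of the combinatorial work will go. I expect the case $n\equiv 2 \pmod 4$ to be the most delicate: there $\exp(G)=2n > |\langle y\rangle|$, so every $\exp(G)$-product-one subsequence must contain coset terms and no reduction to $C_n$ can bypass the pairing/$s$-twist bookkeeping.
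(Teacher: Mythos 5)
Your lower-bound witnesses for ${\sf d}(G)$ and $\eta(G)$ coincide with the paper's (Lemma \ref{lem:daveta} uses exactly $y^{[n-1]}\bdot x$), but the rest of your plan diverges in a way that leaves the hard part unresolved. First, a structural remark: you propose eight separate arguments (a lower and an upper bound for each invariant), whereas the paper proves only two lower bounds (${\sf d}(G)\ge n$, $\eta(G)\ge n+1$) and two upper bounds (${\sf s}(G)\le 2n$ or $3n$, ${\sf E}(G)\le 3n$) and closes the gap with Zhuang--Gao's inequalities ${\sf E}(G)\ge {\sf d}(G)+|G|$ and ${\sf s}(G)\ge \eta(G)+\exp(G)-1$ (Lemma \ref{lem:ineqs}): the chain $n\le {\sf d}(G)\le {\sf E}(G)-|G|\le n$ forces all four values at once. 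Your explicit extremal constructions for ${\sf s}$ and ${\sf E}$ are therefore unnecessary, and verifying them would amount to proving parts of the inverse Theorem \ref{thm:inverse0} in advance.

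The genuine gap is in your upper bounds. Reducing modulo $\langle y\rangle$ and pairing coset terms via $(xy^a)(xy^b)=y^{n/2+as+b}$ produces a \emph{mixed-length} problem over $C_n$: a pair contributes $2$ to the length of the candidate subsequence of $S$ but only one element to the derived sequence over $C_n$, so an $n$-product-one subsequence of $S$ does not correspond to a fixed-length zero-sum subsequence over $C_n$, and the classical values ${\sf s}(C_n)={\sf E}(C_n)=2n-1$ cannot be invoked directly. On top of that sits the $s$-twist you explicitly flag as ``where the bulk of the combinatorial work will go'' --- but that bookkeeping \emph{is} the theorem; deferring it is not a proof. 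The paper's key idea, absent from your proposal, is the choice of normal subgroup. For $n\equiv 0\pmod 4$ it takes $L=\langle y^2\rangle$ of index $4$ (so $G/L\cong C_2\oplus C_2$): pairing terms within each of the four cosets of $L$ always lands in $L\cong C_{n/2}$ regardless of $s$, every pair has uniform $G$-length $2$, so an $(n/2)$-product-one subsequence of the paired sequence lifts to an $n$-product-one subsequence of $S$, and the count $2n\mapsto n-2={\sf s}(C_{n/2})-1$ pairs is handled by Gao's inverse theorem (Lemma \ref{lem:inverseegzcyclic}(ii)) together with the observation that the four leftover terms admit two products differing by $y^{n/2}$. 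For $n\equiv 2\pmod 4$ the paper instead uses $H=\langle x,y^{n_2}\rangle\cong Q_{2n_1}$ and $K=\langle y^{2n_2}\rangle$, importing the inverse Erd\H os--Ginzburg--Ziv theorem for dicyclic groups and ${\sf E}(C_2\oplus C_{2n_2})=6n_2$. Without some such device your reduction to $C_n$ alone does not go through, and I do not see how your plan as stated would complete.
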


The proof splits into the cases $n \equiv 0 \pmod 4$ and $n \equiv 2 \pmod 4$. In each of them we use the inductive method with distinct normal subgroups. The problems related to finding the exact values of the invariants are called {\em direct problems}. After solving the direct problems, it is natural to ask the respective {\em inverse problems}, which consist of finding the structure of ($|G|$-, $\exp(G)$-, short) product-one free sequences. For $n \equiv 0 \pmod 4$, we further establish the related inverse problems.

\begin{theorem}\label{thm:inverse0}
Let $G$ be the group defined by \eqref{def:G}, where $n \equiv 0 \pmod 4$, and let $S \in \mathcal F(G)$.
\begin{enumerate}[(a)]
\item If $|S| = {\sf s}(G) - 1 = 2n - 1$, then $S$ is $n$-product-one free if and only if there exist $\alpha, \beta \in G$ and $t_1, t_2, t_3 \in \mathbb Z$ such that $G \cong \langle \alpha, \beta \mid \alpha^2 = \beta^{n/2}, \beta^n = 1, \beta \alpha = \alpha \beta^s \rangle$, $\gcd(t_1-t_2, n) = 1$ and $S = (\beta^{t_1})^{[n-1]} \bdot (\beta^{t_2})^{[n-1]} \bdot (\alpha \beta^{t_3})$.

\item If $|S| = {\sf E}(G) - 1 = 3n - 1$, then $S$ is $2n$-product-one free if and only if there exist $\alpha, \beta \in G$ and $t_1, t_2, t_3 \in \mathbb Z$ such that $G \cong \langle \alpha, \beta \mid \alpha^2 = \beta^{n/2}, \beta^n = 1, \beta \alpha = \alpha \beta^s \rangle$, $\gcd(t_1-t_2, n) = 1$ and $S = (\beta^{t_1})^{[2n-1]} \bdot (\beta^{t_2})^{[n-1]} \bdot (\alpha \beta^{t_3})$.

\item If $|S| = {\sf \eta}(G) - 1 = {\sf d}(G) = n$, then $S$ is (short) product-one free if and only if there exist $\alpha, \beta \in G$ and $t \in \mathbb Z$ such that $G \cong \langle \alpha, \beta \mid \alpha^2 = \beta^{n/2}, \beta^n = 1, \beta \alpha = \alpha \beta^s \rangle$ and $S = \beta^{[n-1]} \bdot \alpha \beta^t$.
\end{enumerate}
\end{theorem}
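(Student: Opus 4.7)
Throughout, let $H = \langle y \rangle \vartriangleleft G$ and write $S = S_H \bdot S_{xH}$, where $a := |S_{xH}|$. Since $G/H \cong C_2$, every product-one subsequence $T \mid S$ satisfies $|T_{xH}|$ even. The plan for each of the three parts is a case analysis on whether $a = 0$, $a = 1$, or $a \ge 2$, combined with the direct and inverse zero-sum invariants for $C_n \cong H$.

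For the sufficiency direction, the proposed $S$ has $a = 1$ in every case, so any product-one (resp.\ $n$- or $2n$-product-one) subsequence of $S$ lies entirely in $S_H \subset H$. The claim then reduces to a direct verification on $S_H$, using $\gcd(t_1 - t_2, n) = 1$ in parts (a), (b) and the generator condition in (c). For necessity, I first rule out $a = 0$ via the classical $C_n$-invariants: $\mathsf{d}(C_n) = n - 1$ for (c), $\mathsf{s}(C_n) = 2n - 1$ for (a), and two iterated applications of EGZ for (b). The case $a = 1$ then forces $S_H$ to be a product-one free (resp.\ $n$- or $2n$-product-one free) sequence over $C_n$ of length $n-1$, $2n-2$, or $3n-2$ respectively, at which point the inverse Davenport theorem, Gao's inverse EGZ theorem, and the inverse theorem for $\mathsf{s}_{2n}(C_n)$ pin down the stated structure of $S_H$, and hence of $S$.

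The main obstacle is to rule out $a \ge 2$. For this I would fix two terms $xy^{c_1}, xy^{c_2} \in S_{xH}$ and, for each subsequence $T \mid S_H$ with total exponent $D_T$, enumerate the products $\pi(T \bdot xy^{c_1} \bdot xy^{c_2})$ by inserting the commuting $H$-terms of $T$ in all admissible positions around the two $xH$-terms. Using $s^2 \equiv 1 \pmod n$, a direct computation shows that these products form the set
\[
\bigl\{\, y^{\,n/2 + D_T + s c_i + c_{3-i} + (s-1)\sigma} \;:\; i \in \{1,2\},\; \sigma \in \Sigma(T)\,\bigr\} \subset H,
\]
where $\Sigma(T) \subset \mathbb{Z}/n\mathbb{Z}$ denotes the set of subset sums of the exponents of the terms of $T$. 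Since $|S_H| \ge n - 2,\ 2n - 3,\ 3n - 3$ in parts (c), (a), (b) respectively, an Olson-type subset-sum bound over $C_n$, combined with careful control of the image of multiplication by $s - 1$ in $\mathbb{Z}/n\mathbb{Z}$ via the constraints $(s-1)(s+1) \equiv 0 \pmod n$ and $s \not\equiv \pm 1$, produces some $T$ and some $\sigma$ making this exponent vanish modulo $n$. The resulting subsequence has product $1$ and the forbidden length, contradicting extremality. Ensuring the correct total length ($\le n$, $= n$, or $= 2n$ in the three parts) simultaneously with the subset-sum target is the principal technical difficulty, and is what makes the $a \ge 2$ case delicate.
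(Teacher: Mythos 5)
There is a genuine gap, and it is fatal to the overall strategy rather than a repairable technicality. Your plan for necessity is to \emph{rule out} $|S_{x\langle y\rangle}|\ge 2$, i.e.\ to show that any such $S$ contains a forbidden product-one subsequence. But this is false: the theorem is stated up to a change of generating pair $(\alpha,\beta)$, and $\langle y\rangle$ is not the only cyclic index-$2$ subgroup of $G$. When $(n,s)=(2^t,2^{t-1}+1)$ or $(n,s)=(4n_2,2n_2+1)$ with $n_2$ odd, one has $G=\langle x, xy^v\rangle$ for odd $v$, and there exist $n$-product-one free sequences of length $2n-1$ such as $(y^{2\alpha})^{[n-1]}\bdot(xy^{2\delta+1})^{[n-1]}\bdot xy^{2\gamma}$, which have $n$ terms in $x\langle y\rangle$ yet are exactly of the stated form with $\beta=xy^{2\delta+1}$. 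So the case $a\ge 2$ cannot be eliminated; it must be analyzed and shown to yield the extremal structure with respect to a \emph{different} generator. Relatedly, your stated inequality ``$|S_H|\ge n-2,\,2n-3,\,3n-3$'' under the hypothesis $a\ge2$ is reversed: $a\ge 2$ gives $|S_H|\le |S|-2$, and $|S_H|$ can be as small as $0$ or $1$, in which case a subsequence with exactly two $xH$-terms and $n-2$ terms from $S_H$ does not even exist. Finally, even where two $xH$-terms are available together with enough $H$-terms, your product set only moves within a coset of $(s-1)\,\mathbb Z/n\mathbb Z$, whose index is $\gcd(s-1,n)\ge 2$ (both $s-1$ and $n$ are even), so the Olson-type subset-sum argument cannot by itself reach the identity; this is precisely where the extremal examples live.

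For comparison, the paper does not work modulo $\langle y\rangle$ at all. It uses $L=\langle y^2\rangle\cong C_{n/2}$ with $G/L\cong C_2\oplus C_2$, pairs the terms of $S$ into $2$-element blocks with product in $L$, applies the inverse EGZ theorem for $C_{n/2}$ (Lemma \ref{lem:inverseegzcyclic}(ii)) to force each of the four $L$-cosets to carry a constant sequence, reduces to eight possible multiplicity tuples $(m_1,m_2,m_3,m_4)$, and then disposes of the tuples with two large classes by explicit power computations split according to $8\mid n$ and $s\bmod 4$ (together with Lemma \ref{lem:distinct4prod}). That finer quotient is what lets the argument see the alternative generating sets that your $C_2$-quotient collapses.
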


The paper is organized as follows. In Section \ref{sec:pre}, we present some well-known results that will be used throughout the paper. In Section \ref{sec:direct}, we prove the direct problems, culminating in the proof of Theorem \ref{thm:main}. In Section \ref{sec:inverse}, we prove the inverse problems (Theorem \ref{thm:inverse0}).

\section{Preliminary results}\label{sec:pre}

In this section, we present some well-known results that will be used throughout the paper. The first one deals with a direct and its respective inverse problem related to $C_n = \langle y \mid y^n = 1 \rangle$, the cyclic group of order $n$.

\begin{lemma}\label{lem:inverseegzcyclic}
\begin{enumerate}[(i)]
\item For every $n \ge 2$, ${\sf s}(C_n) = {\sf E}(C_n) = 2n-1$.
\item Let $n \ge 2$ be an integer and let $2 \le k \le \lfloor n/2 \rfloor + 2$. Suppose that $S \in \mathcal F(C_n)$ and $|S| = 2n-k$. If $S$ is $n$-product-one free, then there exists $a \bdot b \mid S$ such that $\min\{{\sf v}_a(S), {\sf v}_b(S)\} \ge n - 2k + 3$ and ${\sf v}_a(S) + {\sf v}_b(S) \ge 2n - 2k + 2$, where $C_n = \langle ab^{-1} \rangle$.
\end{enumerate}
\end{lemma}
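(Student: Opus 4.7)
The plan for part (i) is to recognize that the equality ${\sf s}(C_n) = {\sf E}(C_n)$ is immediate from the definitions, since $\exp(C_n) = n = |C_n|$. For the common value, I would prove the lower bound by exhibiting the sequence $0^{[n-1]} \bdot 1^{[n-1]}$ (written additively in $\Z/n\Z$) of length $2n-2$: any $n$-term subsequence consists of $i$ copies of $1$ and $n-i$ copies of $0$ for some $1 \le i \le n-1$, summing to $i \not\equiv 0 \pmod n$. For the upper bound, which is the classical Erd\H{o}s--Ginzburg--Ziv theorem, I would reduce to the prime case via induction on the number of prime factors of $n$: if $n = pm$ with $p$ prime, a sequence of length $2n-1 \ge 2p-1+(2m-2)p$ in $C_n$ can be partitioned iteratively into $2m-1$ zero-sum subsequences of length $p$ using the prime case; projecting the class sums into $C_m$ and applying the inductive hypothesis to those $2m-1$ elements yields an $m$-subset whose corresponding union of blocks is an $n$-zero-sum of total length $n$. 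The prime case itself follows from the Chevalley--Warning theorem applied to $\sum_{i} x_i^{p-1}$ and $\sum_i a_i x_i^{p-1}$ in $2p-1$ variables over $\F_p$.

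For part (ii), the plan is to invoke the classical inverse Erd\H{o}s--Ginzburg--Ziv theorem for cyclic groups, which is due to Bovey--Erd\H{o}s--Niven in the case $k = 2$ and was extended by Gao (and refined by several subsequent authors). Writing the argument out, I would first pass to the additive notation in $\Z/n\Z$ and translate $S$ so that some term with maximal multiplicity sits at $0$; I would then use part (i) together with a ``peeling'' argument to show that any $n$-zero-sum free sequence of length $2n-k$ must have very restricted support. Concretely, if $|\supp(S)| \ge 3$, one partitions $S$ into a ``main'' part $a^{[{\sf v}_a]} \bdot b^{[{\sf v}_b]}$ and a ``small'' part of length at most $k-2$, and combines Cauchy--Davenport on the sumsets obtained by choosing prefixes from the small part with the pigeonhole principle on the arithmetic progression generated by $a - b$, deriving an $n$-zero-sum subsequence whenever $\langle a - b \rangle \subsetneq C_n$ or whenever the multiplicity bounds $\min\{{\sf v}_a, {\sf v}_b\} \ge n - 2k + 3$ and ${\sf v}_a + {\sf v}_b \ge 2n - 2k + 2$ fail.

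The main obstacle is the structural step: forcing $S$ to concentrate on two atoms $a, b$ with $a - b$ generating $C_n$. The cleanest route uses Kneser's theorem (or the Dias da Silva--Hamidoune theorem on distinct-element sumsets) to control $\Pi_n(S)$ when three or more distinct residues appear with substantial multiplicity, and then an explicit case analysis for residues appearing only a few times. Since this lemma is labelled as well-known and is cited from the literature, in practice I would present the above as a sketch and refer the reader to the standard inverse EGZ references (e.g. Gao--Geroldinger's survey) for full details.
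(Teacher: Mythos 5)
The paper does not prove this lemma at all: it is stated as a recollection of known results, with part (i) attributed to Erd\H{o}s--Ginzburg--Ziv \cite{EGZ} and part (ii) to Gao \cite{Ga1}, and the ``proof'' consists of those two citations. Your treatment of part (i) is therefore more self-contained than the paper's: the lower-bound example $0^{[n-1]}\bdot 1^{[n-1]}$, the multiplicative induction (extracting $2m-1$ disjoint zero-sum blocks of length $p$ and applying the inductive hypothesis in $C_m$ to the block sums), and the Chevalley--Warning argument for the prime case together constitute the standard complete proof of EGZ, and the arithmetic in your reduction step ($2pm-1 = 2p-1+(2m-2)p$, allowing exactly $2m-1$ extractions) checks out. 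The observation that ${\sf s}(C_n)={\sf E}(C_n)$ because $\exp(C_n)=|C_n|$ is also correct. For part (ii), however, what you offer is not a proof but a description of the flavour of the argument (Cauchy--Davenport, Kneser, pigeonhole on the progression generated by $a-b$), and you explicitly fall back on citing the literature; this is exactly what the paper does, so there is no gap relative to the paper's own treatment, but be aware that Gao's inverse theorem at this level of generality of $k$ (up to $\lfloor n/2\rfloor+2$) is substantially more delicate than the sketch suggests, and the sketch alone could not stand as a proof if one were demanded. In short: your part (i) is a correct, more elementary and self-contained route than the paper's bare citation; your part (ii) coincides with the paper's approach of deferring to \cite{Ga1}.
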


\begin{proof}
For {\it (i)}, see \cite{EGZ}. For {\it (ii)}, see \cite{Ga1}.
\end{proof}

The second deals with a direct and its respective inverse problem related to $Q_{4n} = \langle x,y \mid x^2 = y^n, y^{2n} = 1, yx = xy^{-1} \rangle$, the dicyclic group of order $4n$.

\begin{lemma}\label{lem:dicyclic}
\begin{enumerate}[(i)]
\item For every $n \ge 2$, ${\sf E}(Q_{4n}) = 6n$.

\item Let $n \ge 3$ and let $S\in \mathcal F(Q_{4n})$ with $|S|=6n-1$. Then $S$ has no $4n$-product-one subsequence if and only if there exist $\alpha,\beta\in Q_{4n}$ with $Q_{4n}=\langle \alpha,\beta\mid \alpha^2=\beta^{n/2}, \beta^n=1, \beta\alpha=\alpha\beta^{-1}\rangle$ such that $S=(\beta^{t_1})^{[4n-1]}\bdot (\beta^{t_2})^{[2n-1]}\bdot \alpha\beta^{t_3}$, where $t_1,t_2,t_3\in [0,2n-1]$ with $\gcd(t_1-t_2,n)=1$.
		
\end{enumerate}
\end{lemma}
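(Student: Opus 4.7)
The plan is to reduce both parts to the index-two cyclic normal subgroup $\langle y\rangle\cong C_{2n}$ via the natural map $\phi\colon Q_{4n}\to C_2$, exploiting the identity $(xy^a)(xy^b) = y^{n+b-a}$, which places the product of any two terms from the non-trivial coset back inside $\langle y\rangle$. I write $S = S_0\bdot S_1$, where $S_0 = S_{\langle y\rangle}$ and $S_1$ collects the remaining terms, so that $\phi(S_1)$ counts the number of $x$-coset contributions.

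For the lower bound in (i), I would verify that the length-$(6n-1)$ sequence $y^{[4n-1]}\bdot (y^2)^{[2n-1]}\bdot x$ has no $4n$-product-one subsequence: a subsequence avoiding $x$ lies in $\langle y\rangle$ and has $y$-exponent sum $a+2b$ with $a+b=4n$, $0\le a\le 4n-1$, $0\le b\le 2n-1$, for which a direct check gives $a+2b\not\equiv 0\pmod{2n}$; any subsequence containing $x$ has an odd number of non-$\langle y\rangle$ terms and therefore maps to the non-trivial class in $C_2$, so cannot equal $1$. For the upper bound, take $|S|=6n$, pair terms of $S_1$ via the identity above to push them into $\langle y\rangle$, and then apply Lemma~\ref{lem:inverseegzcyclic}(i) twice to the resulting $C_{2n}$-valued sequence to extract product-one subsequences whose concatenation has length exactly $4n$. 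The main obstacle is the bookkeeping that forces the length to be exactly $4n$ (rather than merely a multiple of $2n$), treating separately the cases $|S_1|$ even and $|S_1|$ odd.

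For (ii), the forward direction reuses the parity computation above. For the converse, suppose $|S|=6n-1$ is extremal. First rule out $|S_1|=0$: then $|S_0|=6n-1$ would force a $4n$-product-one subsequence via two successive applications of Lemma~\ref{lem:inverseegzcyclic}(i), since $6n-1\ge{\sf E}(C_{2n})=4n-1$ leaves $4n-1$ terms after the first extraction. Next, apply the inverse statement Lemma~\ref{lem:inverseegzcyclic}(ii) to $S_0$ with $k$ chosen so that $|S_0|=4n-k$, yielding two elements $\beta^{t_1},\beta^{t_2}\in\langle y\rangle$ with $\gcd(t_1-t_2,2n)=1$ and near-maximal multiplicities. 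The hardest step is to show $|S_1|=1$: if $|S_1|\ge 2$, pair two of its terms into a $\langle y\rangle$-element and combine with suitably many copies of the dominating $\beta^{t_i}$ to build a $4n$-product-one subsequence, contradicting extremality. Matching lengths against the boundary case of Lemma~\ref{lem:inverseegzcyclic}(ii) then forces the multiplicities $4n-1$ and $2n-1$ and the presence of a single term $\alpha\beta^{t_3}$ outside $\langle y\rangle$.
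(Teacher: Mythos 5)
The paper does not prove this lemma at all: part (i) is quoted from \cite[Theorem 10]{Bas} and part (ii) from \cite[Theorem 1.3]{OZ1}, the latter being the main result of a long paper devoted precisely to this inverse problem. Your proposal instead tries to reprove both from scratch, and while the lower-bound computation in (i) and the elimination of the case $|S_1|=0$ in (ii) are fine, the two steps you yourself flag as ``the main obstacle'' and ``the hardest step'' are exactly where the content of the cited theorems lives, and your sketch does not supply them. For the upper bound in (i), after pairing the terms of $S_1$ into $\langle y\rangle$ each derived term represents either one or two original terms, so a $2n$-term zero-sum subsequence of the derived $C_{2n}$-sequence corresponds to a product-one subsequence of $S$ of length anywhere between $2n$ and $4n$; two applications of Lemma \ref{lem:inverseegzcyclic}(i) therefore do not produce a subsequence of length exactly $4n$ without a genuinely new argument controlling how many paired terms enter each extraction. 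This is not bookkeeping; it is the theorem.

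In (ii) there is a further logical gap: you apply Lemma \ref{lem:inverseegzcyclic}(ii) to $S_0=S_{\langle y\rangle}$, but that lemma requires $S_0$ to be $2n$-product-one free, and this does not follow from $S$ having no $4n$-product-one subsequence. For instance, when $|S_1|=1$ one has $|S_0|=6n-2\ge {\sf s}(C_{2n})$, so $S_0$ certainly contains a $2n$-product-one subsequence; what you actually know is only that $S_0$ contains no two disjoint ones, so the inverse lemma can at best be applied to $S_0\bdot T^{[-1]}$ for an extracted $2n$-product-one subsequence $T$, after which the structure of $T$ itself must still be pinned down. Likewise, the step ruling out $|S_1|\ge 2$ and the derivation of the exact multiplicities $4n-1$ and $2n-1$ are asserted rather than proved: pairing two terms of $S_1$ yields some $y^{c}$, and whether it can be completed to a product-one subsequence of length exactly $4n$ depends on a feasibility constraint on the solution $k$ of $c+(4n-2-k)t_1+kt_2\equiv 0$ that must be checked against the available multiplicities. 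As it stands the proposal is an outline of a plausible strategy, not a proof; the honest options are to cite \cite{Bas} and \cite{OZ1} as the paper does, or to carry out the case analysis in full.
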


\begin{proof}
For {\it (i)}, see \cite[Theorem 10]{Bas}. For {\it (ii)}, see \cite[Theorem 1.3]{OZ1}.
\end{proof}

We now state two direct problems related to the group $C_2 \times C_{2n}$.

\begin{lemma}\label{lem:abelianrank2s}
For every $n \ge 1$, ${\sf s}(C_2 \oplus C_{2n}) = 4n + 1$ and ${\sf E}(C_2 \oplus C_{2n}) = 6n$.

\end{lemma}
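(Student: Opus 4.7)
The plan is to derive both identities from classical theorems on rank-two abelian groups. For the first identity, ${\sf s}(C_2 \oplus C_{2n}) = 4n+1$, I would appeal to the general formula ${\sf s}(C_m \oplus C_n) = 2(m+n) - 3$ valid for all positive integers with $m \mid n$; this is Reiher's theorem (resolving the Kemnitz conjecture in the square case $m = n$) together with the standard extension to the non-square rank-two case obtained by projecting onto the cyclic factor $C_n$ and iterating the Erd\H os--Ginzburg--Ziv theorem ${\sf s}(C_n) = 2n-1$. Specializing to $m = 2$ and replacing the second index by $2n$ (the hypothesis $2 \mid 2n$ is automatic) immediately gives $2(2 + 2n) - 3 = 4n+1$.

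For the second identity, ${\sf E}(C_2 \oplus C_{2n}) = 6n$, I would combine two standard ingredients. The first is Gao's theorem, which asserts that ${\sf E}(G) = {\sf d}(G) + |G|$ for every finite abelian group $G$. The second is the Olson--Davenport formula ${\sf d}(C_m \oplus C_n) = m + n - 2$ whenever $m \mid n$, which yields ${\sf d}(C_2 \oplus C_{2n}) = 2n$. Since $|C_2 \oplus C_{2n}| = 4n$, Gao's identity then produces ${\sf E}(C_2 \oplus C_{2n}) = 2n + 4n = 6n$, as required.

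The main obstacle here is not mathematical but bibliographical: one needs to cite references for Reiher's theorem (together with its extension to the $m \mid n$ case), for Gao's theorem, and for the Olson--Davenport formula. Since the divisibility hypothesis $2 \mid 2n$ holds trivially for every $n \ge 1$, no further work is needed once the appropriate references are invoked, and the proof of the lemma reduces to a one-line substitution for each invariant.
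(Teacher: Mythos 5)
Your proposal is correct and follows essentially the same route as the paper: the value of ${\sf s}(C_2 \oplus C_{2n})$ is quoted from the known rank-two formula $2(m+n)-3$ for $m \mid n$ (the paper cites \cite[Theorem 5.8.3]{GeHK}, where for $m=2$ the result is classical and does not require Reiher's theorem), and ${\sf E}(C_2 \oplus C_{2n}) = 6n$ is obtained exactly as in the paper by combining Gao's identity ${\sf E}(G) = {\sf d}(G) + |G|$ for abelian groups with Olson's formula ${\sf d}(C_m \oplus C_n) = m+n-2$. Both computations check out, so only the bibliographic attributions differ.
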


\begin{proof}
For the first equation, see \cite[Theorem 5.8.3]{GeHK}. 
The second one is just a combination of \cite[Theorem 1]{Ga2} and \cite[Theorem 1]{Ols2}.
\end{proof}



The proof of Theorem \ref{thm:main} is widely simplified using the following result, since it will be enough obtaining good upper bounds for ${\sf E}(G)$ and ${\sf s}(G)$, as well as good lower bounds for ${\sf d}(G)$ and $\eta(G)$.

\begin{lemma}\label{lem:ineqs}
For every finite group $G$, ${\sf E}(G) \ge {\sf d}(G) + |G|$ and ${\sf s}(G) \ge \eta(G) + \exp(G) - 1$.
\end{lemma}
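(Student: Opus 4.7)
The plan is to exhibit, in each case, an extremal sequence witnessing that the asserted lower bound is strict, by concatenating a known extremal sequence with a suitable number of identity terms. Since the identity element contributes nothing to any product, concatenating copies of $1$ preserves freeness of products while allowing length to be padded.

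For the inequality ${\sf E}(G) \ge {\sf d}(G) + |G|$, I would take $S_1 \in \mathcal F(G)$ to be a product-one free sequence of maximal length ${\sf d}(G)$, and let $S = S_1 \bdot 1^{[|G|-1]}$, so $|S| = {\sf d}(G) + |G| - 1$. The key claim is that $S$ has no $|G|$-product-one subsequence. Indeed, any subsequence $T \mid S$ splits uniquely as $T = T_1 \bdot T_2$ with $T_1 \mid S_1$ and $T_2 = 1^{[j]}$ for some $j \in [0,|G|-1]$. If $|T| = |G|$ and $1 \in \pi(T)$, then since every term of $T_2$ equals the identity, rearranging places the copies of $1$ freely, and we obtain $1 \in \pi(T_1)$. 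If $|T_1| \ge 1$, this contradicts that $S_1$ is product-one free; if $|T_1| = 0$, then $|T_2| = |G| > |G|-1$, which is also impossible. Hence $S$ witnesses ${\sf E}(G) > {\sf d}(G) + |G| - 1$.

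For the inequality ${\sf s}(G) \ge \eta(G) + \exp(G) - 1$, I would proceed identically with a short product-one free sequence $S_1$ of maximal length $\eta(G) - 1$, and set $S = S_1 \bdot 1^{[\exp(G) - 1]}$. Suppose $T \mid S$ has $|T| = \exp(G)$ and $1 \in \pi(T)$. Writing $T = T_1 \bdot T_2$ as before with $T_2 = 1^{[j]}$, the same rearrangement argument gives $1 \in \pi(T_1)$. Now $|T_1| = \exp(G) - j \ge 1$ because $j \le \exp(G) - 1$, and obviously $|T_1| \le \exp(G)$, so $T_1$ is a subsequence of $S_1$ of length in $[1, \exp(G)]$ whose set of products contains $1$, contradicting that $S_1$ is short product-one free.

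There is no real obstacle here; the only subtlety is the noncommutative rearrangement step, but it is trivial because inserting or removing identity terms from any ordering does not change the resulting product, so $1 \in \pi(T)$ transfers cleanly to $1 \in \pi(T_1)$. The lengths $|S| = {\sf d}(G) + |G| - 1$ and $|S| = \eta(G) + \exp(G) - 2$ then yield the two inequalities by definition of ${\sf E}(G)$ and ${\sf s}(G)$.
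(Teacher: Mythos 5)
Your proof is correct and is essentially the standard argument: the paper itself only cites \cite[Lemma 4]{ZhGa} for this statement, and the proof there proceeds exactly as you do, by padding an extremal product-one free (resp.\ short product-one free) sequence with $|G|-1$ (resp.\ $\exp(G)-1$) copies of the identity. Your handling of the rearrangement step and of the degenerate case $|T_1|=0$ is sound, the implicit fact that an extremal sequence contains no identity terms being automatic since a single identity term is already a (short) product-one subsequence.
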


\begin{proof}
See \cite[Lemma 4]{ZhGa}.
\end{proof}

In \cite{Ga2}, Gao conjectured that ${\sf s}(G) = \eta(G) + \exp(G) - 1$ for every finite group $G$, and in \cite{ZhGa}, Zhuang and Gao conjectured that ${\sf E}(G) = {\sf d}(G) + |G|$ for every finite group $G$. The latter was proven for abelian groups in \cite{Ga2}.

\section{The direct problems}\label{sec:direct}

In this section, we prove Theorem \ref{thm:main}. We first deal with the lower bounds for ${\sf d}(G)$ and for $\eta(G)$.

\begin{lemma}\label{lem:daveta}
Let $G$ be the group defined by \eqref{def:G}. Then ${\sf d}(G) \ge n$ and $\eta(G) \ge n+1$.
\end{lemma}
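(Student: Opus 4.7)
The plan is to exhibit a single explicit sequence of length $n$ that is product-one free; this simultaneously yields both lower bounds.

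For any fixed $t \in \mathbb{Z}$, take
$$S = y^{[n-1]} \bdot (xy^t) \in \mathcal{F}(G), \qquad |S| = n.$$
I would verify $1 \notin \Pi(S)$ by invoking the fact that $H = \langle y \rangle \trianglelefteq G$ is a normal subgroup of index $2$, with quotient $C_2$, and splitting each nonempty subsequence $T \mid S$ into two cases according to whether $T$ contains the term $xy^t$.

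In the first case, $T = y^{[k]}$ for some $1 \le k \le n-1$, and $\pi(T) = \{y^k\} \not\ni 1$ since $\ord(y) = n$. In the second case, $T$ consists of $xy^t$ together with some $j \in [0,n-1]$ copies of $y$. Using the defining relation $yx = xy^s$ to shuttle all copies of $y$ to the right of $xy^t$, every ordering of $T$ multiplies to an element of the form $xy^a$ for some $a \in \mathbb{Z}$, which lies in the non-identity coset $xH$ and hence is not $1$. This analysis (the only substantive content of the proof) shows $S$ is product-one free, giving ${\sf d}(G) \ge |S| = n$.

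For $\eta(G) \ge n + 1$, note that a product-one free sequence is, a fortiori, short-product-one free, so the same $S$ witnesses $\eta(G) > |S| = n$. There is no genuine obstacle here: the only care is to phrase the coset argument so that it covers all reorderings of $T$ simultaneously, which is automatic once one observes that $\phi_H(T)$ contains exactly one element of the non-trivial coset of $H$.
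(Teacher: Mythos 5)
Your proof is correct and is essentially the paper's own argument: the paper exhibits the sequence $y^{[n-1]} \bdot x$ (your $S$ with $t=0$) and asserts it is product-one free, which immediately gives both bounds. Your coset analysis via the normal subgroup $\langle y \rangle$ of index $2$ simply supplies the routine verification that the paper leaves to the reader.
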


\begin{proof}
Both inequalities follow from the fact that the sequence $y^{[n-1]} \bdot x \in \mathcal F(G)$ is $n$-product-one free.
\end{proof}

Next we deal with upper bounds for the case $n \equiv 0 \pmod 4$.

\begin{proposition}\label{prop:dicyclic0}
Let $G$ be the group defined by \eqref{def:G}, where $n \equiv 0 \pmod 4$. Then ${\sf s}(G) \le 2n$ and ${\sf E}(G) \le 3n$.
\end{proposition}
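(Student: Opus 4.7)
The plan is to exploit the cyclic normal subgroup $H = \langle y \rangle \cong C_n$ of index $2$ in $G$. For $S \in \mathcal F(G)$, write $S = S_H \bdot S_{xH}$ with $a = |S_H|$ and $b = |S_{xH}|$, and argue by cases on $(a,b)$. The crucial algebraic fact is that any product of two elements of $xH$ lies in $H$: explicitly $(xy^{a_1})(xy^{a_2}) = y^{n/2 + s a_1 + a_2}$, while the reversed product differs by $(s-1)(a_2-a_1) \pmod n$. Thus even-sized blocks of $xH$-terms may be ``paired'' into elements of $H \cong C_n$, reducing the question to zero-sum problems in $C_n$ to which Lemma~\ref{lem:inverseegzcyclic} applies.

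For ${\sf s}(G) \le 2n$, take $|S| = 2n$. If $a \ge 2n-1 = {\sf s}(C_n)$, Lemma~\ref{lem:inverseegzcyclic}(i) applied to $S_H$ yields the $n$-product-one subsequence. Otherwise $b \ge 2$, and I look for an $n$-product-one subsequence of $S$ comprising $2j \ge 2$ elements of $S_{xH}$ and $n-2j$ elements of $S_H$: arranging the $xH$-terms first, the condition that the product in $H$ be trivial becomes a linear equation modulo $n$ in the chosen $H$-terms and in the ordering of the $xH$-pairs, whose solvability I verify using Lemma~\ref{lem:inverseegzcyclic}(ii) to control the structure of $S_H$ when it is itself $n$-product-one free. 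For ${\sf E}(G) \le 3n$, take $|S| = 3n$: if $a \ge 3n-1$, two successive applications of ${\sf s}(C_n) = 2n-1$ give two disjoint $n$-product-one subsequences of $S_H$ whose concatenation is a $2n$-product-one subsequence; otherwise $b \ge 2$ and the pairing strategy applies with substantial slack, since length $2n$ is sought from a sequence of length $3n$.

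The main obstacle I expect is the tight ${\sf s}$-subcase $a = 2n-2$, $b = 2$: Lemma~\ref{lem:inverseegzcyclic}(ii) then pins $S_H$ to the rigid form $h^{[n-1]} \bdot h'^{[n-1]}$ with $\langle h h'^{-1}\rangle = C_n$, and only two $xH$-terms $g_1, g_2$ remain. Closing this case reduces to showing that varying the multiplicities of $h$ versus $h'$ among the $n-2$ chosen $H$-terms, together with the two orderings $g_1 g_2$ versus $g_2 g_1$, realises the required residue modulo $n$. This will use crucially the hypotheses $s \not\equiv \pm 1 \pmod n$ and $n \equiv 0 \pmod 4$.
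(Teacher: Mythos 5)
Your strategy differs from the paper's in the choice of normal subgroup, and this choice is where the argument breaks down. The paper works with $L = \langle y^2 \rangle \cong C_{n/2}$, of index $4$; since $G/L \cong C_2 \oplus C_2$, \emph{every} coset squares into $L$, so pairing terms within each of the four cosets produces at least $n-2 = {\sf s}(C_{n/2})-1$ blocks of uniform weight $2$ (plus one leftover block of length $4$ whose product also lies in $L$), and an $n$-product-one subsequence of $S$ corresponds exactly to an $\frac{n}{2}$-term product-one selection over $C_{n/2}$ --- a fixed-length problem governed by ${\sf s}(C_{n/2}) = n-1$ and its inverse theorem. Your subgroup $H = \langle y \rangle \cong C_n$ of index $2$ destroys this uniformity: a candidate subsequence using $2j$ terms of $S_{xH}$ and $n-2j$ terms of $S_H$ reduces, after pairing, to a product-one selection of $n-j$ terms of $C_n$ in which exactly $j$ terms are constrained to be pair-products. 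This is a weighted, variable-length zero-sum problem over $C_n$ to which neither part of Lemma~\ref{lem:inverseegzcyclic} applies directly: part (i) concerns subsequences of length exactly $n$, and part (ii) requires $|S_H| = 2n-k$ with $k \le \lfloor n/2\rfloor + 2$, so it says nothing about $S_H$ once $|S_{xH}| > \lfloor n/2 \rfloor + 2$. Concretely, for $|S_H| = |S_{xH}| = n$ (or for $S \in \mathcal F(xH)$, where one needs a product-one selection of exactly $n/2$ pair-products over $C_n$) you have no structural information and no cited tool that produces a product-one selection of the required length; the assertion that the problem ``becomes a linear equation modulo $n$ \dots whose solvability I verify'' is precisely the content that is missing, since determining which residues are attainable as such sums is the whole difficulty.

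Even in the one case you single out ($|S_H| = 2n-2$, $|S_{xH}| = 2$), the verification is not carried out, and it is genuinely delicate: by Theorem~\ref{thm:inverse0}(a) the sequences $(\beta^{t_1})^{[n-1]} \bdot (\beta^{t_2})^{[n-1]} \bdot (\alpha\beta^{t_3})$ of length $2n-1$ are $n$-product-one free, so your configuration is one term away from extremal, and the congruence you must solve can have its unique solution at the forbidden value $i = n-1$ for both orderings of $g_1 g_2$ whenever $(s-1)(c_1-c_2) \equiv 0 \pmod n$; rescuing it then requires interleaving $H$-terms between $g_1$ and $g_2$ and an analysis of which multiples of $s-1$ are reachable, which is not routine. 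I would recommend either completing all of these cases in detail or switching to the index-$4$ subgroup $\langle y^2 \rangle$, which collapses the whole argument to the single fixed-length constant ${\sf s}(C_{n/2})$. (Your reduction of ${\sf E}(G) \le 3n$ to ${\sf s}(G) \le 2n$ is fine and agrees with the paper, so only the ${\sf s}$ bound is at issue.)
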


\begin{proof}
If ${\sf s}(G) \le 2n$, then any sequence over $G$ of length $3n$ contains two disjoint $n$-product-one subsequences. Therefore ${\sf E}(G) \le 3n$. Hence it suffices to show that any sequence over $G$ of length $2n$ contains an $n$-product-one subsequence. For this, let $S \in \mathcal F(G)$ of length $|S| = 2n$.

Let $L = \langle y^2 \rangle \cong C_{\frac{n}{2}}$, which is a normal subgroup of $G$, and decompose $S = S_L \bdot S_{yL} \bdot S_{xL} \bdot S_{xyL}$. The product of any two terms in the same subsequence belongs to $L$. Therefore it is possible to extract from $S$ at least 
\begin{align*}
\left\lfloor \frac{|S_L|}{2} \right\rfloor + \left\lfloor \frac{|S_{yL}|}{2} \right\rfloor + \left\lfloor \frac{|S_{xL}|}{2} \right\rfloor + \left\lfloor \frac{|S_{xyL}|}{2} \right\rfloor &\ge \frac{|S_L| - 1}{2} + \frac{|S_{yL}| - 1}{2} + \frac{|S_{xL}| - 1}{2} + \frac{|S_{xyL}| - 1}{2} \\
&= \frac{2n - 4}{2} = n - 2 = {\sf s}(L) - 1
\end{align*}
$2$-product-$L$ disjoint subsequences. If either the later inequality is strict or these $n-2$ products over $L$ form a sequence that is not $\frac{n}{2}$-product-one free, then we are done. Otherwise, $S = U_1 \bdot \dots \bdot U_{n-2} \bdot U_0$, where $U_0$ has no $2$-product-$L$ subsequences, $|U_0| = 4$ and $|U_i| = 2$ with $\pi(U_i) \subset L$ for every $i \in [1,n-2]$. Since $n-2 = {\sf s}(C_{\frac{n}{2}}) - 1$, by Lemma \ref{lem:inverseegzcyclic}, if $g_i \in \pi(U_i)$ for $i \in [1,n-2]$, then we may assume, after renumbering if necessary, that $g_i = y^{2a}$ for $i \in [1,\frac{n}{2}-1]$ and $g_i = y^{2b}$ for $i \in [\frac{n}{2},n-2]$, where $\gcd(a-b, \frac{n}{2}) = 1$. Notice that $U_0$ is a $4$-product-$L$ sequence. Let $y^{2c} \in \pi(U_0)$ and let $\ell \in [0,\frac{n}{2}-1]$ such that $\ell \equiv (a-b)^{-1}(2b-c) \pmod {\frac{n}{2}}$. Hence 
$$U_0 \bdot U_1 \bdot \dots \bdot U_{\ell} \bdot U_{\frac{n}{2}} \bdot \dots \bdot U_{n-\ell-3}$$
is an $n$-product-one subsequence of $S$.
\end{proof}

Finally, we deal with upper bounds for the case $n \equiv 2 \pmod 4$.

\begin{proposition}\label{prop:dicyclic2}
Let $G$ be the group defined by \eqref{def:G}, where $n \equiv 2 \pmod 4$. Then ${\sf s}(G)  = {\sf E}(G) \le 3n$.
\end{proposition}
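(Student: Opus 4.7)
Since $n\equiv 2\pmod 4$ makes $|G|=2n=\exp(G)$ by \eqref{expG}, the definitions of ${\sf s}(G)$ and ${\sf E}(G)$ coincide, and only the single upper bound ${\sf E}(G)\le 3n$ needs to be proved: every $S\in\mathcal F(G)$ with $|S|=3n$ contains a $2n$-product-one subsequence.

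My plan is to mirror Proposition~\ref{prop:dicyclic0}, using the normal subgroup $L=\langle y^2\rangle$, which here has \emph{odd} order $m:=n/2$, so that $G/L\cong C_4=\langle\bar x\rangle$ with $\bar y=\bar x^2$. Decompose $S=S_L\bdot S_{xL}\bdot S_{yL}\bdot S_{xyL}$ by the four cosets of $L$, with sizes $a_0,a_1,a_2,a_3$ summing to $6m$. A direct computation in $G/L\cong C_4$ shows that the only $2$-element subproducts of $S$ that fall back into $L$ are pairs inside $S_L$, pairs inside $S_{yL}$, and cross-pairs of the form $S_{xL}\cdot S_{xyL}$, yielding at least
\[
\lfloor a_0/2\rfloor+\lfloor a_2/2\rfloor+\min(a_1,a_3)
\]
pair-products in $L\cong C_m$. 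An elementary counting shows this exceeds $3m-1$ in the ``balanced'' regime $|a_1-a_3|\le 1$, whereupon two applications of Lemma~\ref{lem:inverseegzcyclic}(i) (which gives ${\sf s}(C_m)=2m-1$) extract from the pool of pair-products two disjoint $m$-product-one subsequences; the underlying $2m$ pairs of $S$ then concatenate into a $4m$-product-one subsequence of $S$, settling that regime.

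The main obstacle is the unbalanced regime $|a_1-a_3|\ge 2$ (possibly compounded by parity defects in $a_0$ or $a_2$), where the naive pair-count drops just below $3m-1$. I plan to bridge the gap by adjoining $4$-element subproducts of $S$ that also land in $L$ because their coset indices in $C_4$ sum to $0\pmod 4$ --- for instance four elements drawn from $S_{xL}$, four from $S_{xyL}$, or two from each of $S_{xL}$ and $S_{xyL}$ --- and running a length-weighted variant of the EGZ argument so that the product-one combination of selected $L$-atoms has total length exactly $4m=2n$ in $S$. For the extremal profiles where $S$ concentrates in a single coset (say $S=S_{xL}$), one can reduce directly to a subset-sum problem modulo $m/\gcd(s-1,m)$ inside $L$, solvable by standard EGZ/Davenport techniques exploiting the formula $(xy^{c_1})(xy^{c_2})=y^{m+c_1 s+c_2}$ and the freedom in pairings/orderings. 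Checking that these complementary tools collectively cover every coset profile $(a_0,a_1,a_2,a_3)$ is the technical core of the proof.
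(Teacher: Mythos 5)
Your reduction to the single bound ${\sf E}(G)\le 3n$ and your treatment of the balanced regime are correct, but the proof is not complete: everything from ``the main obstacle is the unbalanced regime'' onward is a plan rather than an argument, and that is exactly where the difficulty of this proposition lives. Two concrete problems. First, when the mass of $S$ concentrates in $S_{xL}\bdot S_{xyL}$ with $|a_1-a_3|$ large (extreme case $S\in\mathcal F(xL)$, $a_1=6m$), the only atoms with product in $L$ are $4$-element blocks, and a greedy extraction yields only about $\lfloor 6m/4\rfloor=\lfloor 3m/2\rfloor$ of them, which is below ${\sf s}(C_m)=2m-1$ for every $m\ge 3$; so the EGZ step you rely on does not apply, and you would have to exploit the dependence of the block product $(xy^{c_1})(xy^{c_2})(xy^{c_3})(xy^{c_4})=y^{(c_1+c_3)s+(c_2+c_4)}$ on the internal pairing, which you have not done. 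Second, once atoms of lengths $2$ and $4$ are mixed, a zero-sum selection of $m$ atoms in $C_m$ no longer has total length $4m$ in $S$; the ``length-weighted variant of the EGZ argument'' is not a known lemma and would itself need a proof. Note also why this route is structurally harder here than in Proposition \ref{prop:dicyclic0}: for $n\equiv 0\pmod 4$ one has $G/\langle y^2\rangle\cong C_2\oplus C_2$, so \emph{any} two elements of the same coset multiply into $L$, whereas for $n\equiv 2\pmod 4$ the quotient is $C_4$ and the cosets $xL$, $xyL$ are not self-paired --- which is precisely why the paper abandons this quotient in this case.

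The paper instead proves the stronger claim that every sequence of length $2n$ already has an $n$-product-one subsequence (two disjoint such subsequences inside a sequence of length $3n$ then give the required $2n$-product-one subsequence), via a different inductive chain: with $n_1=\gcd(n,s+1)$ and $n_2=n/n_1$ it quotients by $H=\langle x,y^{n_2}\rangle\cong Q_{2n_1}$ to produce $2n_1-1$ blocks whose products form a sequence over a dicyclic group, applies the inverse Erd\H os--Ginzburg--Ziv theorem for dicyclic groups (Lemma \ref{lem:dicyclic}) to pin down the shape of that sequence, and then uses ${\sf E}(C_2\oplus C_{2n_2})=6n_2$ on $G/K$ with $K=\langle y^{2n_2}\rangle$ together with a final parity argument. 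To salvage your approach you would have to carry out the subset-sum analysis for all unbalanced coset profiles; as written, the technical core is missing.
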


\begin{proof}
By \eqref{expG}, we have that $\exp(G) = |G|$, thus ${\sf s}(G) = {\sf E}(G)$. Let $S \in \mathcal F(G)$ of length $|S| = 2n$. It remains to show that $S$ has an $n$-product-one subsequence.

Let $n_1 = \gcd(n, s+1)$ and $n_2 = n/n_1$, so that $n = n_1n_2$, $n_1$ is even, $s \equiv -1 \pmod {n_1}$ and $s \equiv 1 \pmod {n_2}$. Since $n \equiv 2 \pmod 4$ and $n_1$ is even, it follows that $n_2$ is odd. Moreover, since $s$ is odd, we may assume that $3 \le s \le n-3$ and we further have $s \equiv 1 \pmod {2n_2}$. Set 
$$H = \langle x, y^{n_2} \rangle \cong Q_{2n_1} \quad \text{ and } \quad K = 
\langle y^{2n_2} \rangle \cong C_{\frac{n_1}{2}}.$$ 
Then both $H$ and $K$ are normal subgroups of $G$, whence 
$$G/H \cong C_{n_2} \quad \text{ and } \quad G/K \cong 
C_2 \oplus C_{2n_2}.$$ 

Assume to the contrary that $S$ has no $n$-product-one subsequence. Since ${\sf E}(G/H) = {\sf E}(C_{n_2}) = 2n_2-1$, it is possible to decompose $S = T_1 \bdot \ldots \bdot T_{2n_1-1} \bdot T_0$, where $T_i$ is an $n_2$-product-$H$ subsequence for every $i \in [1,2n_1-1]$ and $T_0$ is a subsequence of $S$ of length $|T_0| = n_2$. Let $h_i\in \pi(T_i)$ for each $i\in [1,2n_1-1]$. Then $h_1\bdot \ldots\bdot h_{2n_1-1} \in \mathcal F(H)$ has no product-one subsequence of length $n_1$. 

Suppose that $n_1 = 2$. Since $\frac{n}{2} = n_2$ divides $s-1$ and $s-1 \le n-4$, it follows that $s = n_2+1$. Since $s$ is odd, we have that $n_2$ is even, a contradiction. Moreover, $n_1 = 4$ implies that $n \equiv 0 \pmod 4$, another contradiction. From now on, we assume that $n_1 \ge 6$.

By Lemma \ref{lem:dicyclic}, it follows that $h_1\bdot \ldots\bdot h_{2n_1-1} = (y^{t_1n_2})^{[n_1-1]} \bdot (y^{t_2n_2})^{[n_1-1]} \bdot  xy^{t_3n_2}$, where $t_1,t_2,t_3\in [0,n_1-1]$ with $\gcd(t_1-t_2,n_1)=1$. After renumbering if necessary, we may assume that $\pi(T_i) = \{y^{t_1n_2}\}$ for $i\in [1,n_1-1]$, $\pi(T_j)=\{y^{t_2n_2}\}$ for $j\in [n_1,2n_1-2]$, and $\pi(T_{2n_1-1}) = \{xy^{t_3n_2}\}$. 

Notice that $|T_{n_1-3} \bdot T_{n_1-2} \bdot T_{n_1-1} \bdot T_{2n_1-2} \bdot T_{2n_1-1} \bdot T_0| = 6n_2 = {\sf E}(C_2 \oplus C_{2n_2}) = {\sf E}(G/K)$, therefore $T_{n_1-3} \bdot T_{n_1-2} \bdot T_{n_1-1} \bdot T_{2n_1-2} \bdot T_{2n_1-1} \bdot T_0$ contains a $4n_2$-product-$K$ subsequence $W$. Let $y^{2tn_2}\in \pi(W) \subset K$, where $t\in [0,\frac{n_1}{2}-1]$. If 
\begin{equation}\label{eq:k}
2tn_2 + (n_1 - 4 - k)t_1n_2 + kt_2n_2 \equiv 0 \pmod n
\end{equation} 
for some $k \in [0, n_1-4]$, then 
$$W \bdot T_1 \bdot \ldots \bdot T_{n_1-4-k} \bdot T_{n_1} \bdot \ldots \bdot T_{n_1+k-1}$$ 
is an $n$-product-one sequence and we are done. But \eqref{eq:k} is equivalent to 
\begin{equation}\label{eq:equivk}
k(t_2-t_1) \equiv 4t_1-2t \pmod {n_1}, 
\end{equation}
which has a solution $k \in [0,n_1-1]$ since $\gcd(t_1-t_2,n_1)=1$. By \eqref{eq:equivk}, $k$ must be even. Thus the only remaining case is $k = n_1-2$, which implies that $2t_1 + 2t_2 \equiv 2t \pmod {n_1}$. In particular, $\pi(W) = \{y^{2tn_2}\}$. 

Since $|W| = 4n_2$ and $G/H \cong C_{n_2}$, it is possible to decompose $W = W_1 \bdot W_2 \bdot W_3 \bdot W_4$, where $W_i$ is an $n_2$-product-$H$ subsequence for $i \in [1,4]$. We may assume without loss of generality that $\pi(W_1) = \pi(W_2) = \pi(W_3) = y^{t_1n_2}$ and $\pi(W_4) = y^{t_2n_2}$. It implies that $y^{(3t_1+t_2)n_2} \in \pi(W) = \{y^{2tn_2}\}$, that is, $3t_1+t_2 \equiv 2t \pmod {n_1}$, a contradiction since $3t_1+t_2$ is odd.
\end{proof}

\subsection{Proof of Theorem \ref{thm:main}}
We are now able to solve the direct problems. From Lemma \ref{lem:daveta}, Lemma \ref{lem:ineqs}, Proposition \ref{prop:dicyclic0} and Proposition \ref{prop:dicyclic2}, it follows that
$$n \le {\sf d}(G) \le {\sf E}(G) - |G| \le 3n - 2n = n$$
and either
$$n+1 \le \eta(G) \le {\sf s}(G) - \exp(G) + 1 \le 2n - n + 1 = n+1 \quad \text{ for $n \equiv 0 \pmod 4$,}$$
or
$$n+1 \le \eta(G) \le {\sf s}(G) - \exp(G) + 1 \le 3n - 2n + 1 = n+1 \quad \text{ for $n \equiv 2 \pmod 4$.}$$
Thus we are done.
\qed

\section{The inverse problem for $n \equiv 0 \pmod 4$}\label{sec:inverse}

In this section, we prove the inverse problem for $n \equiv 0 \pmod 4$. We need the following result.

\begin{lemma}\label{lem:distinct4prod}
Let $G$ be the group defined by \eqref{def:G}, where $n \equiv 0 \pmod 4$, $T = T_1 \bdot \dots \bdot T_{n-3} \in \mathcal F(G)$ such that $|T_i| = 2$ for every $i \in [1,n-3]$, $|T_0| = 4$ and $\{y^{2w}, y^{2w+\frac{n}{2}}\} \subset \pi(T_0)$. Suppose that 
$$y^{2a} \in \pi(T_i) \text{ for } i \in \left[ 1, \frac{n}{2}-1 \right] \quad \text{ and } \quad y^{2b} \in \pi(T_i) \text{ for } i \in \left[ \frac{n}{2}, n-3 \right],$$
where $\gcd\left(a-b, \frac{n}{2}\right) = 1$. Then $1 \in \Pi_n(T \bdot T_0)$. More specifically, $T \bdot T_0$ contains an $n$-product-one subsequence $T'$ with $T_0 \mid T'$.
\end{lemma}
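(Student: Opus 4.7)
The plan is to build the desired subsequence $T'$ by adjoining $T_0$ with exactly $n/2-2$ of the pairs $T_i$, so that $|T'| = 4 + 2(n/2-2) = n$. I will pick $\ell$ of the pairs from the ``first group'' $T_1, \ldots, T_{n/2-1}$, each internally ordered so that its product equals $y^{2a}$, and $m = n/2 - 2 - \ell$ from the ``second group'' $T_{n/2}, \ldots, T_{n-3}$, each ordered so that its product equals $y^{2b}$. Since the subgroup $L = \langle y^2 \rangle$ is cyclic, hence abelian, these contributions commute, so there is an ordering of $T'$ in which $T_0$ appears as a block first and contributes a product $y^{2w''}$ with $2w'' \in \{2w,\, 2w + n/2\}$, while the remaining blocks contribute $y^{2a\ell} \cdot y^{2bm}$. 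The overall product of $T'$ in this order is then $y^{2(w'' + a\ell + bm)}$.

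Requiring $1 \in \pi(T')$ thus becomes $w'' + a\ell + bm \equiv 0 \pmod{n/2}$, and substituting $m = n/2 - 2 - \ell$ reduces this to the single linear congruence
$$(a-b)\ell \equiv 2b - w'' \pmod{n/2}.$$
Since $\gcd(a-b,\, n/2) = 1$ by hypothesis, each of the two possible values $w'' \in \{w,\, w + n/4\}$ (noting $n \equiv 0 \pmod 4$ makes $n/4$ an integer) determines a unique solution $\ell \in [0, n/2-1]$; call these $\ell_1$ and $\ell_2$ respectively.

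The only obstruction to producing a valid $T'$ is that I need $m \geq 0$, i.e., $\ell \leq n/2 - 2$, so the sole ``bad'' value is $\ell = n/2-1$. The crux of the argument — which I expect to be the main hurdle — is to show that not both $\ell_1$ and $\ell_2$ can equal $n/2 - 1$. For this, subtract the two defining congruences to obtain
$$(a-b)(\ell_1 - \ell_2) \equiv n/4 \pmod{n/2}.$$
Since $n \geq 8$ and $n \equiv 0 \pmod 4$, we have $n/4 \not\equiv 0 \pmod{n/2}$; combined with $\gcd(a-b, n/2) = 1$ this forces $\ell_1 \not\equiv \ell_2 \pmod{n/2}$, so at least one of $\ell_1, \ell_2$ lies in $[0, n/2 - 2]$. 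Picking the corresponding $w''$ and $\ell$, setting $m = n/2 - 2 - \ell$, and forming $T' = T_0 \bdot T_{i_1} \bdot \cdots \bdot T_{i_\ell} \bdot T_{j_1} \bdot \cdots \bdot T_{j_m}$ (with the $i_k \in [1, n/2-1]$ and $j_k \in [n/2, n-3]$) then gives an $n$-product-one subsequence of $T \bdot T_0$ containing $T_0$, as required.
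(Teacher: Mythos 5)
Your argument is correct, and it is essentially the intended one: the paper disposes of this lemma by citing it as an adaptation of \cite[Lemma 2.6]{AMR}, and the adaptation is exactly your computation — reduce to the congruence $(a-b)\ell \equiv 2b - w'' \pmod{n/2}$, use $\gcd(a-b,n/2)=1$, and exploit the two products $y^{2w}, y^{2w+n/2}$ of $T_0$ (whose corresponding solutions $\ell_1,\ell_2$ differ since $(a-b)(\ell_1-\ell_2)\equiv n/4 \not\equiv 0 \pmod{n/2}$) to guarantee that one solution lands in the feasible range $[0,n/2-2]$. Your writeup is a valid self-contained proof of the lemma.
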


\begin{proof}
This is just an adaptation of \cite[Lemma 2.6]{AMR}.
\end{proof}

\subsection{Proof of Theorem \ref{thm:inverse0}}

We are now able to solve the inverse problem for $n \equiv 0 \pmod 4$.

\begin{enumerate}[{\it (a)}]
\item Let $S \in \mathcal F(G)$ be a $2n$-product-one free sequence of length $|S| = 2n-1$. Similarly as in Proposition \ref{prop:dicyclic0}, it is possible to decompose $S = U_1 \bdot \dots \bdot U_{n-2} \bdot  U_0$ with $|U_i| = 2$ and $\pi(U_i) \subset L = \langle y^2 \rangle$ for every $i \in [1,n-2]$, and $|U_0| = 3$. Let $h_i \in \pi(U_i)$ for $i \in [1,n-2]$. It follows from Lemma \ref{lem:inverseegzcyclic} that 
\begin{equation}\label{h1hn}
T := h_1 \bdot \dots \bdot  h_{n-2}=(y^{2a})^{[\frac n2 -1]} \bdot (y^{2b})^{[\frac n2 -1]}, 
\end{equation}
where $\gcd\left(a-b, \frac n2 \right) = 1$, and 
$$T_0 = (y^{2\alpha})^{[\varepsilon]} \bdot (y^{2\beta+1})^{[\lambda]} \bdot (y^{2\gamma}x)^{[\theta]} \bdot (y^{2\delta+1}x)^{[\nu]},$$ 
where $\varepsilon, \lambda, \theta, \nu \in \{0,1\}$ and $\varepsilon +  \lambda + \theta + \nu = 3$. 
For $i \in [1,n-2]$, let $U_i = g_{i,1}  \bdot  g_{i,2}$. Since $\phi(g_{i,1} g_{i,2}) \in L$ and $G/L \cong C_2 \oplus C_2$, if $g_{i,1}= y^{a_1}x^{b_1}$ and $g_{i,2}= y^{a_2}x^{b_2}$, then $a_1\equiv a_2 \pmod 2$ and $b_1\equiv b_2\pmod 2$. Let $S_1 = S_L$, $S_2 = S_{yL}$, $S_3 = S_{xL}$, $S_4 = S_{xyL}$, and $m_i = |S_i|$, where $m_1+m_2+m_3+m_4 = 2n-1$ and for exactly one $i \in [1,4]$, say $i_0$, $m_i$ is even. We note that every $h_j$ for $j \in [1,n-2]$ is the product of two terms of $S_i$ for some $i \in [1,4]$.

Suppose that $m_{i_0} > 0$ and let $g'  \bdot  g'' \mid S_{m_{i_0}}$. We may assume without loss of generality that $h_1 = g' \cdot g'' = y^{2b} \in \mathcal F(L)$. Then $g'  \bdot  T_0 \in \mathcal F(L)$ has four terms and it is easy to verify that $y^{2k}, y^{2k+n/2} \in \pi(g'  \bdot  T_0)$ for some $k \in \Z$. Since $h_2 \bdot \dots \bdot  h_{n-2} = (y^{2a})^{[ \frac n2-1]} \bdot (y^{2b})^{[\frac n2-2]}$, Lemma \ref{lem:distinct4prod} ensures that $S$ has an $n$-product-one subsequence. Thus $m_{i_0} = 0$ for some $i_0 \in [1,4]$. 

Let $i \in [1,4] \backslash \{i_0\}$, so that $m_i$ is odd. We claim that $S_i = g^{[m_i]}$ for some term $g$ of $S_i$. This is trivial for $m_i = 1$ and hence we assume $m_i \ge 3$. Let $S_i = g_1  \bdot  \dots  \bdot  g_{m_i}$ and assume to the contrary that $S_i$ has two distinct terms, say $g_1 \neq g_2$. After renumbering if necessary, we assume that $U_j = g_{2j} \bdot  g_{2j+1}$ for every $j \in [1,\lfloor \frac{m_i}{2} \rfloor]$ and $g_1 \mid T_0$. Thus $h_j = g_{2j} \cdot g_{2j+1} \in \pi(U_j) \subset L$. Let $h_1' = g_1 \cdot g_3$. Notice that $g_1 \cdot g_3 \in L$ and $g_1 \cdot g_3 \neq g_2 \cdot g_3$. Since $S$ has no $n$-product-one subsequences, we have that $T = h_1 \bdot \dots \bdot h_{n-2}$ and $T' := h_1' \bdot h_2 \bdot \dots \bdot h_{n-2}$ are both in $\mathcal F(L)$ and have no $\frac n2$-product-one subsequences. Since $\frac n2 \ge 4$, it follows from Lemma \ref{lem:inverseegzcyclic} that $T = T'$, a contradiction. Thus 
\begin{equation}\label{S}
S=(y^{2\alpha})^{[m_1]} \bdot (y^{2\beta+1})^{[m_2]} \bdot  (xy^{2\gamma})^{[m_3]}  \bdot  (xy^{2\delta+1})^{[m_4]},
\end{equation}
where $m_{i_0} = 0$ for some $i_0 \in [1,4]$ and $m_i$ is odd for every $i \in [1,4] \backslash \{i_0\}$.

Let 
$$T_1 = (y^{4\alpha})^{[\lfloor \frac{m_1}{2} \rfloor]}  \bdot  (y^{4\beta+2})^{[\lfloor \frac{m_2}{2} \rfloor]}  \bdot  (y^{4\gamma})^{[\lfloor \frac{m_3}{2} \rfloor]}  \bdot  (y^{4\delta+2})^{[\lfloor \frac{m_4}{2} \rfloor]}$$ 
be a sequence of length $n-2$ in $\mathcal F(L)$.
Since $T_1$ has no $n$-product-one subsequence, it follows from Lemma \ref{lem:inverseegzcyclic} that $\lfloor \frac{m_1}{2} \rfloor + \lfloor \frac{m_3}{2} \rfloor = \lfloor \frac{m_2}{2} \rfloor + \lfloor \frac{m_4}{2} \rfloor = \frac{n}{2}-1$. It suffices to show that $1 \in \{m_1,m_2,m_3,m_4\}$. Assume to the contrary that $m_i \geq 3$ for every $i \in [1,4] \backslash \{i_0\}$. 

For $i_0 \in [1,2]$, let $y^{2k} = xy^{2\gamma} \cdot xy^{2\gamma} \cdot xy^{2\delta+1} \cdot xy^{2\delta+1}$ and $y^{2k'} = xy^{2\gamma} \cdot xy^{2\delta+1} \cdot xy^{2\gamma} \cdot xy^{2\delta+1} = y^{2k+\frac{n}{2}}$.
For $i_0 = 3$, let $y^{2k} = y^{2\beta+1} \cdot y^{2\beta+1} \cdot xy^{2\delta+1} \cdot xy^{2\delta+1}$ and $y^{2k'} = y^{2\beta+1} \cdot xy^{2\delta+1} \cdot y^{2\beta+1} \cdot xy^{2\delta+1} = y^{2k+\frac{n}{2}}$.
For $i_0 = 4$, let $y^{2k} = y^{2\beta+1} \cdot y^{2\beta+1} \cdot xy^{2\gamma} \cdot xy^{2\gamma}$ and $y^{2k'} = y^{2\beta+1} \cdot xy^{2\gamma} \cdot y^{2\beta+1} \cdot xy^{2\gamma} = y^{2k+\frac{n}{2}}$.

In any case, there exist a subsequence $V \mid S$ of length $|V| = 4$ and an integer $k$ such that $y^{2k}, y^{2k + \frac{n}{2}} \in \pi(V)$. 
By Lemma \ref{lem:distinct4prod}, we obtain a contradiction.

Therefore $m_i = 1$ for some $i \in [1,4] \backslash \{i_0\}$. To sum up, since $\lfloor \frac{m_1}{2} \rfloor + \lfloor \frac{m_3}{2} \rfloor = \lfloor \frac{m_2}{2} \rfloor + \lfloor \frac{m_4}{2} \rfloor = \frac{n}{2}-1$ and $m_1+m_2+m_3+m_4 = 2n-1$, we have that $(m_1,m_2,m_3,m_4)$ is one of the following tuples \begin{multicols}{4}
\begin{enumerate}[(i)]
\item $(0,1,n-1,n-1)$, 
\item $(0,n-1,n-1,1)$, 
\item $(1,0,n-1,n-1)$, 
\item $(n-1,0,1,n-1)$, 
\item $(n-1,1,0,n-1)$, 
\item $(n-1,n-1,0,1)$, 
\item $(1,n-1,n-1,0)$, 
\item $(n-1,n-1,1,0)$.
\end{enumerate}
\end{multicols}

By the parities of the exponents of $x$ and $y$, it is easily seen that it is not possible to use three distinct terms in the sequences given by the parameters above in order to form $n$-product-one subsequences. Nevertheless, the sequences given by the parameters (vi) and (viii) are $n$-product-one free if and only if $\gcd\left(2\alpha - (2\beta+1), \frac{n}{2}\right) = 1$. Furthermore, if $(n,s) = (2^t, 2^{t-1}+1)$ for some $t \ge 3$, then it is easy to check that $G = \langle x, xy^v \rangle$, where $v$ is odd, and if $(n,s) = (4n_2, 2n_2+1)$ for some $n_2 \ge 3$ odd, then it is easy to check that $G = \langle x, xy^v \rangle$, where $v$ is odd. Therefore the sequences given by (iv) and (v) are equivalent to the sequences given by (vi) and (viii) when considering other generating set. Moreover, the following identities hold.
\begin{align}\label{identities:cases}
(y^{2\alpha})^{\frac{n}{2}} &= 1, \\
(y^{2\beta+1})^{\frac{n}{2}} &= y^{\frac{n}{2}}, \nonumber \\
(xy^{2\gamma})^{\frac{n}{2}} &= 
\begin{cases}
1 &\text{ if $8 \mid n$}, \\
y^{\frac{n}{2}} &\text{ if $8 \nmid n$},
\end{cases} \nonumber \\
(xy^{2\delta+1})^{\frac{n}{2}} &= 
\begin{cases}
1 &\text{ if either $8 \mid n$ and $s \equiv 3 \!\!\! \pmod 4$ or $8 \nmid n$ and $s \equiv 1 \!\!\! \pmod 4$}, \\
y^{\frac{n}{2}} &\text{ if either $8 \mid n$ and $s \equiv 1 \!\!\! \pmod 4$ or $8 \nmid n$ and $s \equiv 3 \!\!\! \pmod 4$}.
\end{cases} \nonumber 
\end{align}

Let $n_1 = \gcd(s+1,n)$ and $n_2 = n/n_1$, so that $s \equiv -1 \pmod {n_1}$, $s \equiv 1 \pmod {n_2}$, and $\gcd(n_1,n_2) \in \{1,2\}$. We note that $n_1$ is always even. Moreover, if $n$ is not a power of $2$, then $n_1 \ge 4$ and $n_2 \ge 3$.

By the identities in \eqref{identities:cases}, we consider the following cases:
\begin{enumerate}[(I)]
\item Case $8 \nmid n$ and $s \equiv 1 \pmod 4$. We may consider only the sequences given by (i) and (iii). It suffices to verify that $(xy^{2\gamma})^{[n-1]} \bdot (xy^{2\delta+1})^{[n-1]}$ contains an $n$-product-one subsequence. Notice that 
$$(xy^{2\gamma})^{2k} \cdot (xy^{2\delta+1})^{n-2k} = 1$$ 
if and only if 
$$k \equiv 0 \pmod {\frac{\frac{n}{2}}{\gcd(\frac{s+1}{2}(2\gamma-(2\delta+1)), \frac{n}{2})}}.$$ 
If $\gcd(\frac{s+1}{2}(2\gamma-(2\delta+1)), \frac{n}{2}) \neq 1$, then $S$ contains an $n$-product-one subsequence. Suppose that $\gcd(\frac{s+1}{2}(2\gamma-(2\delta+1)), \frac{n}{2}) = 1$. In particular, we must have  $n_1 = 2$, since $\frac{n_1}{2}$ divides both $\frac{s+1}{2}$ and $\frac{n}{2}$, which implies that $\frac{n_1}{2}$ divides $\gcd(\frac{s+1}{2}(2\gamma-(2\delta+1)), \frac{n}{2}) = 1$. Hence $n$ is a power of $2$. Since $n \equiv 0 \pmod 4$ and $8 \nmid n$, it follows that $n = 4$, and then $s \equiv \pm 1 \pmod n$, a contradiction.

\item Case $8 \nmid n$ and $s \equiv 3 \pmod 4$. We may consider only the sequences given by (iv) and (v). It suffices to verify that $(y^{2\alpha})^{[n-1]} \bdot (xy^{2\delta+1})^{[n-1]}$ contains an $n$-product-one subsequence. Notice that 
$$(y^{2\alpha})^{2k} \cdot (xy^{2\delta+1})^{n-2k} = 1$$ 
if and only if 
$$k \equiv 0 \pmod {\frac{\frac{n}{2}}{\gcd(2\alpha - (2\delta+1)\frac{s+1}{2} - \frac{n}{4}, \frac{n}{2})}}.$$ 
If $\gcd(2\alpha - (2\delta+1)\frac{s+1}{2} - \frac{n}{4}, \frac{n}{2}) \neq 1$, then $S$ contains an $n$-product-one subsequence. Suppose that $\gcd(2\alpha - (2\delta+1)\frac{s+1}{2} - \frac{n}{4}, \frac{n}{2}) = 1$. Then 
$$(y^{2\alpha})^{2k-1} \cdot xy^{2\delta+1} \cdot y^{2\alpha} \cdot (xy^{2\delta+1})^{n-2k-1} = 1$$
if and only if 
\begin{equation}\label{cases-iv-v}
\left[2\alpha - (2\delta+1)\frac{s+1}{2} - \frac{n}{4} \right] k \equiv \alpha(1-s) \pmod {\frac{n}{2}},
\end{equation}
which has a solution $k \in [0, \frac{n}{2}-1]$. We are done whether $k \in [1, \frac{n}{2}-1]$. However $k = 0$ yields $2\alpha \equiv 0 \pmod {\frac{n_1}{2}}$. Notice that $4$ divides $n_1 = \gcd(s+1,n)$. Since $\frac{n_1}{4}$ divides $2\alpha$, $\frac{s+1}{2}$ and $\frac{n}{4}$, it follows that $\frac{n_1}{4}$ divides $\gcd(2\alpha - (2\delta+1)\frac{s+1}{2} - \frac{n}{4}, \frac{n}{2}) = 1$, which yields $n_1 = 4$ and $n_2$ is odd. In this case, $s = 2n_2+1$ and $S$ is $n$-product-one free since $G = \langle x, xy^{2\delta+1} \rangle$ and $y^{2\alpha}$ is a power of $xy^{2\delta+1}$ with an even exponent. 

\item Case $8 \mid n$ and $s \equiv 3 \pmod 4$. We may consider only the sequences given by (ii) and (vii). It suffices to verify that $(y^{2\beta+1})^{[n-1]} \bdot (xy^{2\gamma})^{[n-1]}$ contains an $n$-product-one subsequence. Notice that 
$$(y^{2\beta+1})^{2k} \cdot (xy^{2\gamma})^{n-2k} = 1$$ 
if and only if 
$$k \equiv 0 \pmod {\frac{\frac{n}{2}}{\gcd(2\beta+1 - \gamma(s+1) - \frac{n}{4}, \frac{n}{2})}}.$$ 
If $\gcd(2\beta+1 - \gamma(s+1) - \frac{n}{4}, \frac{n}{2}) \neq 1$, then $S$ contains an $n$-product-one subsequence. Suppose that $\gcd(2\beta+1 - \gamma(s+1) - \frac{n}{4}, \frac{n}{2}) = 1$. Then 
$$(y^{2\beta+1})^{2k-1} \cdot xy^{2\gamma} \cdot y^{2\beta+1} \cdot (xy^{2\gamma})^{n-2k-1} = 1$$
if and only if 
\begin{equation}\label{cases-ii-vii}
\left[2\beta+1 - \gamma(s+1) - \frac{n}{4} \right] k \equiv (2\beta+1)\frac{1-s}{2} \pmod {\frac{n}{2}},
\end{equation}
which has a solution $k \in [0, \frac{n}{2}-1]$. We are done whether $k \in [1, \frac{n}{2}-1]$. However $k = 0$ yields $2\beta+1 \equiv 0 \pmod {\frac{n_1}{2}}$. In particular, $\frac{n_1}{2}$ is odd. On the other hand, $s \equiv 3 \pmod 4$ implies that $4$ divides $n_1 = \gcd(s+1,n)$, a contradiction.

\item Case $8 \mid n$ and $s \equiv 1 \pmod 4$. We may consider only the sequences given by (i)-(v) and (vii). We consider three subcases:
\begin{enumerate}[{(IV.}1)]
\item Subcase $(xy^{2\gamma})^{[n-1]} \bdot (xy^{2\delta+1})^{[n-1]} \mid S$, which corresponds to the sequences given by parameters (i) and (iii). Similar to Case (I), we may suppose that $\gcd(\frac{s+1}{2}(2\gamma-(2\delta+1)), \frac{n}{2}) = 1$ and $n$ is a power of $2$, say, $n = 2^t$ for some $t \ge 3$. Thus we have $n_2 = 2^{t-1}$, therefore $s = 2^{t-1}+1$. Notice that 
$$(xy^{2\gamma})^{2k-1} \cdot xy^{2\delta+1} \cdot xy^{2\gamma} \cdot (xy^{2\delta+1})^{n-2k-1} = 1$$
if and only if
$$(2\gamma - (2\delta+1) - 2^{t-2})k \equiv 2^{t-2} \pmod {2^{t-1}},$$
which has a solution $k \in [1,2^{t-1}-1]$ since $2\gamma - (2\delta+1) - 2^{t-2}$ is odd and $k \not\equiv 0 \pmod {2^{t-1}}$. Thus $S$ contains an $n$-product-one sequence.

\item Subcase $(y^{2\alpha})^{[n-1]} \bdot (xy^{2\delta+1})^{[n-1]} \mid S$, which corresponds to the sequences given by parameters (iv) and (v). Similar to Case (II), we may suppose that $\gcd(2\alpha - (2\delta+1)\frac{s+1}{2} - \frac{n}{4}), \frac{n}{2}) = 1$ and $2\alpha \equiv 0 \pmod {\frac{n_1}{2}}$. Notice that $n_1 = \gcd(s+1,n) \equiv 2 \pmod 4$ and then $4 \mid n_2$. Since $\frac{n_1}{2}$ divides $2\alpha$, $\frac{s+1}{2}$ and $\frac{n}{4}$, it follows that $\frac{n_1}{2}$ divides $\gcd(2\alpha - (2\delta+1)\frac{s+1}{2} - \frac{n}{4}, \frac{n}{2}) = 1$, which yields $n_1 = 2$ and $n$ is a power of $2$, say $n = 2^t$ for some $t \ge 3$. In this case, $s = 2^{t-1}+1$ and $S$ is $n$-product-one free since $G = \langle x, xy^{2\delta+1} \rangle$ and $y^{2\alpha}$ is a power of $xy^{2\delta+1}$ with an even exponent.

\item Subcase $(y^{2\beta+1})^{[n-1]} \bdot (xy^{2\gamma})^{[n-1]} \mid S$, which corresponds to the sequences given by parameters (ii) and (vii). Similar to Case (III), we may suppose that $\gcd(2\beta+1 - \gamma(s+1) - \frac{n}{4}), \frac{n}{2}) = 1$ and $\frac{n_1}{2}$ is odd, that is, $n_1 \equiv 2 \pmod 4$. In particular, $4 \mid n_2$. Since $\frac{n_1}{2}$ divides $2\beta+1$, $s+1$ and $\frac{n}{4}$, it follows that $\frac{n_1}{2}$ divides $\gcd(2\beta+1 - \gamma(s+1) - \frac{n}{4}, \frac{n}{2}) = 1$, which yields $n_1 = 2$ and $n$ is a power of $2$, say $n = 2^t$ for some $t \ge 3$. In this case, $s = 2^{t-1}+1$ and Equation \eqref{cases-ii-vii} becomes 
$$(2\beta+1 - 2\gamma - 2^{t-2})k \equiv 2^{t-2} \pmod {2^{t-1}},$$
which has a solution $k \in [1,2^{t-1}-1]$. Therefore, $S$ contains an $n$-product-one subsequence.
\end{enumerate}
\end{enumerate}
This completes the proof of {\it (a)}.

\item Let $S \in \mathcal F(G)$ be a $2n$-product-one free sequence of length $|S| = 3n-1$. Since $|S| > 2n = {\sf s}(G)$, it is possible to extract an $n$-product-one subsequence $T \mid S$. It follows that $|S \bdot T^{[-1]}| = 2n-1$ and $S \bdot T^{[-1]}$ is $n$-product-one free. From item {\it (a)}, it follows that $S \bdot T^{[-1]} = (y^{t_1})^{[n-1]} \bdot (y^{t_2})^{[n-1]} \bdot xy^{t_3}$, where $\gcd(t_1 - t_2, n) = 1$. We claim that either $T = (y^{t_1})^{[n]}$ or $T = (y^{t_2})^{[n]}$. In fact, suppose that there exists $g \mid T$ such that $g \not\in \{y^{t_1} , y^{t_2}\}$. Then $|S \bdot (g \bdot y^{t_1} \bdot y^{t_2})^{[-1]}| = 3n-4 > 2n$, and we may ensure that $S \bdot (g \bdot y^{t_1} \bdot y^{t_2})^{[-1]}$ contains an $n$-product-one subsequence $T'$. Since $g \bdot y^{t_1} \bdot y^{t_2} \mid S \bdot T'^{[-1]}$, we have from item {\it (a)} that $S \bdot T'^{[-1]} = (y^{t_1})^{[n-1]} \bdot (y^{t_2})^{[n-1]} \bdot g$ and hence $g \mid S_{x \langle y \rangle}$. If $g$ is not the same term as $xy^{t_3}$, then $S \bdot (g \bdot xy^{t_3})^{[-1]}$ contains an $n$-product-one subsequence $T''$ and $g \bdot xy^{t_3} \mid S \bdot T''^{[-1]}$. Again from item {\it (a)}, $S \bdot T''^{[-1]}$ is not $n$-product-one free, a contradiction. It implies that $S = (y^{t_1})^{[u]} \bdot (y^{t_2})^{[3n-2-u]} \bdot xy^{t_3}$. Since $\gcd(t_1-t_2, n) = 1$, the only possibilities for an $n$-product-one subsequence are $(y^{t_1})^n = 1$ or $(y^{t_2})^n = 1$. Therefore either $u = 2n-1$ or $u = n-1$, and we are done.

\item Let $S \in \mathcal F(G)$ be a (short) product-one free sequence of length $|S| = n$. Then $S \bdot 1^{[n-1]}$ and $S \bdot 1^{[2n-1]}$ are $n$-product-one free and $2n$-product-one free sequences of lengths $2n-1$ and $3n-1$, respectively. Thus items {\it (a)} and {\it (b)} imply that there exist $\alpha, \beta \in G$ and $t_2, t_3 \in \mathbb Z$ such that $G \cong \langle \alpha, \beta \mid \alpha^2 = \beta^{n/2}, \beta^n = 1, \beta \alpha = \alpha \beta^s \rangle$, $\gcd(t_2, n) = 1$ and $S \bdot 1^{[n-1]} = 1^{[n-1]} \bdot (\beta^{t_2})^{[n-1]} \bdot (\alpha \beta^{t_3})$ and $S \bdot 1^{[2n-1]} = 1^{[2n-1]} \bdot (\beta^{t_2})^{[n-1]} \bdot (\alpha \beta^{t_3})$. Therefore $S = (\beta^{t_2})^{[n-1]} \bdot (\alpha \beta^{t_3})$.
\end{enumerate}
\qed

\section*{Acknowledgements}

This paper was written when the author was a postdoctoral fellow at the University of Graz, Austria.
He would like to thank NAWI Graz for the financial support and all the colleagues in the Algebra and
Number Theory Research Group for their hospitality and the opportunity to learn a lot from the group.
In particular, he would like to thank Qinghai Zhong for discussions during the preparation of this paper.
The author was also partially supported by FAPEMIG grants RED-00133-21, APQ-02546-21 and APQ-
01712-23.

\end{document}